\theoremstyle{plain}
   \newtheorem{theo}{Theorem}
   \newtheorem{lemma}{Lemma}
   \newtheorem{defi}{Definition}
\newcommand{\uc}{\text{\rm U\hspace{-0.04 cm}C}}
\newcommand{\luc}{\text{\rm L\hspace{-0.03 cm}U\hspace{-0.04 cm}C}}
\newcommand{\ruc}{\text{\rm R\hspace{-0.03 cm}U\hspace{-0.04 cm}C}}
\newcommand{\td}{\text{\rm T\hspace{-0.03 cm}D}}
\newcommand{\balg}{\mathcal A}
\def\Q{{\mathbb Q}}
\def\R{{\mathbb R}}
\def\C{{\mathbb C}}
\def\N{{\mathbb N}}
\title{\textsc{Metrizable topologies and admissible algebras}}
\author{Camilo G\'omez}
\date{}
\begin{document}
\maketitle
\begin{abstract} 
\noindent We provide a necessary and sufficient condition for the embeddability of a metrizable group into semigroup compactifications associated to uniformly continuous functions. Our result employs a technique used by I. Ben Yaacov, A. Berenstein and S. Ferri, and generalizes a theorem by them. This paper is part of the author's Ph.D. dissertation written under the supervision of Stefano Ferri and Jorge Galindo at Universidad de los Andes (Bogot\'a-Colombia).
\end{abstract}
\section{Introduction}
\pagestyle{myheadings}
\markboth{\hfill\textsc{\small Metrizable topologies and admissible algebras}\hfill}
{\hfill\textsc{\small C. G\'omez}\hfill}
A \emph{semigroup compactification} of a topological group consists of a compact semigroup containing a dense homomorphic image of the given group in its topological center. On the other hand, an algebra of complex-valued bounded continuous functions on a group is \emph{admissible} if certain subset of its dual space, namely the \emph{spectrum}, can be endowed with a compact semigroup structure. An algebra of functions \emph{generates} the topology of the group if it possesses enough functions to separate points from closed sets not containing them.\\
\\
The fact that semigroup compactifications are in correspondence with admissible algebras is combined with certain correction technique developed by I. Ben Yaacov, to generalize the main result in \cite{YBF}, namely, a characterization of admissible algebras generating the topology of a metrizable group is obtained in terms of the existence of an appropriate equivalent metric.
\section{Preliminaries}
\pagestyle{myheadings}
\markboth{\hfill\textsc{\small Metrizable topologies and admissible algebras}\hfill}
{\hfill\textsc{\small C. G\'omez}\hfill}
Given a topological group $G$ and an element $g\in G$, the \emph{right translation by $g$} is the map $\rho_g:G\longrightarrow G$ defined by $\rho_g(h)=hg$. The \emph{left translation} $\lambda_g:G\longrightarrow G$ is analogously defined. The $C^*$-algebra of complex-valued bounded continuous functions defined on $G$, endowed with the uniform norm and complex conjugation as involution, is denoted by $C_b(G)$. The \emph{translation operators} on $C_b(G)$ are then defined by $R_g f:=f\circ \rho_g$ and $L_g f:=f\circ \lambda_g$, for $f\in C_b(G)$.\\
\\
If $\balg\subset C_b(G)$ is a commutative unital $C^*$-algebra, its \emph{spectrum} $\sigma(\balg)$ is the set of all multiplicative functionals on $\balg$, where a \emph{multiplicative functional} is a nonzero algebra homomorphism from $\balg$ to $\C$. Every multiplicative functional $\mu$ on $\balg$ determines a \emph{left introversion operator} $T_\mu:\balg\longrightarrow C_b(G)$, given by $\langle T_\mu,f\rangle=\langle \mu,Lf\rangle$, where $\langle Lf,g\rangle=L_gf$.
\begin{defi}
Let $\balg\subset C_b(G)$ be a $C^*$-algebra containing the constant functions. The algebra $\balg$ is called \emph{$m$-admissible} if it is \emph{translation-invariant}, i.e. $R_g[\balg]\subset \balg$ and $L_g[\balg]\subset \balg$ for every $g\in G$; and \emph{left $m$-introverted}, i.e. $T_\mu [\balg]\subset \balg$ for every $\mu\in
\sigma(\balg)$.
\end{defi}
\noindent The $m$-admissible algebras are in correspondence with certain \emph{compactifications} of the group, as defined below. We recall that a \emph{right-topological semigroup} $X$ is a semigroup for which every right translation $\rho_x:X\longrightarrow X$ is continuous, and that its \emph{topological center} is the set $\Lambda(X):=\{x\in X:\lambda_x\text{\ is continuous}\}$.
\noindent
\begin{defi}
A \emph{semigroup compactification} of a topological group $G$ is a pair $(\epsilon,X)$
where:
\begin{enumerate}
\item $X$ is a Hausdorff compact right-topological semigroup,
\item $\epsilon:G\longrightarrow X$ is a continuous homomorphism; and
\item $\epsilon[G]\subset \Lambda(X)$, and, $\overline{\epsilon[G]}=X$.
\end{enumerate} 
\end{defi}
\noindent Given an $m$-admissible algebra  $\balg\subset C_b(G)$, a compactification $(\epsilon, X)$ is an \emph{$\balg$-compactification} of $G$ provided $\epsilon^*[C(X)]=\balg$, where $\epsilon^*[C(X)]:=\{f\circ \epsilon\mid f\in C(X)\}$. The next result establishes the announced correspondence between $m$-admissible algebras and semigroup compactifications.
\begin{theo}[\cite{analysis}, Theorem 3.1.7]\label{3.1.7}
If $(\epsilon,X)$ is a compactification of $G$, then $\epsilon^*[C(X)]$ is an $m$-admissible algebra of $C_b(G)$. Conversely, for every $m$-admissible algebra $\balg$ of $C_b(G)$ there exists, up to isomorphism, a unique $\balg$-compactification $(\epsilon_\balg,G^\balg)$ of $G$.
\end{theo}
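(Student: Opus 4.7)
Given a compactification $(\epsilon, X)$, the pullback $\epsilon^{*}\colon C(X) \to C_b(G)$ is a unital $*$-homomorphism, so $\balg := \epsilon^{*}[C(X)]$ is a unital $C^{*}$-subalgebra containing the constants. Translation invariance is read off the identities $R_g(\phi \circ \epsilon) = (\phi \circ \rho_{\epsilon(g)}) \circ \epsilon$ and $L_g(\phi \circ \epsilon) = (\phi \circ \lambda_{\epsilon(g)}) \circ \epsilon$, noting that $\rho_{\epsilon(g)}$ is continuous by the right-topological hypothesis and $\lambda_{\epsilon(g)}$ is continuous because $\epsilon(g) \in \Lambda(X)$. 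Since $\epsilon[G]$ is dense in $X$, the map $\epsilon^{*}$ is injective, and Gelfand duality identifies $\sigma(\balg)$ with $X$; under this identification each $\mu \in \sigma(\balg)$ is evaluation at some $x \in X$, and a direct computation yields $T_\mu f = (\phi \circ \rho_x) \circ \epsilon \in \balg$, giving left $m$-introversion.

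\textbf{Reverse direction.} For an $m$-admissible algebra $\balg$, I would take $G^{\balg} := \sigma(\balg)$ with the weak-$*$ topology, which is compact Hausdorff by Banach--Alaoglu (the spectrum is a weak-$*$ closed subset of the unit ball of $\balg^{*}$). Define the product by $(\mu\nu)(f) := \mu(T_\nu f)$; this is well defined because left $m$-introversion makes $T_\nu f$ lie in $\balg$, and short direct checks show that $\mu\nu$ is again multiplicative and that the product is associative. Right-topologicality is immediate from the weak-$*$ continuity of $\mu \mapsto \mu(T_\nu f)$ with $\nu$ fixed. The evaluation map $\epsilon_{\balg}(g)(f) := f(g)$ is then a continuous homomorphism: the homomorphism property reduces to the identity $T_{\epsilon_{\balg}(h)} f = R_h f$, which lies in $\balg$ by translation invariance. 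Finally, $\epsilon_{\balg}(g) \in \Lambda(G^{\balg})$ because $(\epsilon_{\balg}(g) \cdot \nu)(f) = \nu(L_g f)$ is weak-$*$ continuous in $\nu$, again by translation invariance.

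\textbf{Identification, density, and uniqueness.} The Gelfand transform $f \mapsto \hat{f}$ with $\hat{f}(\mu) := \mu(f)$ is a $*$-isomorphism $\balg \to C(G^{\balg})$, and the tautology $\hat{f} \circ \epsilon_{\balg} = f$ shows $\epsilon_{\balg}^{*}[C(G^{\balg})] = \balg$. For the density of $\epsilon_{\balg}[G]$ in $G^{\balg}$: any $F \in C(G^{\balg})$ vanishing on $\overline{\epsilon_{\balg}[G]}$ pulls back to $0 \in \balg$ and must therefore be zero, so by Urysohn the closure equals $G^{\balg}$. Uniqueness up to isomorphism is obtained by dualizing the $C^{*}$-algebra isomorphism $(\epsilon')^{*}\colon C(X') \to \balg$ of any other $\balg$-compactification $(\epsilon', X')$ back through Gelfand duality, and then routinely checking compatibility with the multiplication and the embedding.

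\textbf{Where the work really lies.} Everything past the construction of $G^{\balg}$ is unravelling definitions; the conceptual heart is the verification that $(\mu\nu)(f) := \mu(T_\nu f)$ is a well-defined, associative operation on $\sigma(\balg)$. This is precisely where the left $m$-introversion hypothesis bites, and it is the step I expect to demand the most careful bookkeeping.
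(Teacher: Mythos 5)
Your argument is correct and is essentially the standard one: the paper itself offers no proof of this statement, simply citing Theorem 3.1.7 of Berglund--Junghenn--Milnes, and your construction of $G^{\balg}$ as $\sigma(\balg)$ with the introverted product $(\mu\nu)(f)=\mu(T_\nu f)$, together with the Gelfand-duality identifications in the forward direction, is precisely the proof given there. The only steps you compress --- that $T_\nu$ is a unital multiplicative operator (so $\mu\circ T_\nu\in\sigma(\balg)$), the identity $T_{\nu\rho}=T_\nu\circ T_\rho$ for associativity, and the semigroup compatibility check in the uniqueness argument --- are all routine and go through as you indicate.
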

\noindent In particular, for an $m$-admissible algebra $\balg\subset C_b(G)$ it follows that $f\in \balg$ if and only if there exists a continuous extension $\widetilde f:G^\balg\longrightarrow \C$ such that the following diagram commutes: 
\[
\begin{diagram}
\node{} \node{G^\balg} \arrow{s,r,..}{\widetilde f}\\
\node{G} \arrow{ne,t}{\epsilon_\balg} \arrow{e,b}{f}\node{\C.}
\end{diagram}
\]
In the next section we define the concept of \emph{pre-metric} and explain a technique used in \cite{YBF} to associate a metric to every pre-metric fulfilling adequate continuity requirements. In the final section these concepts are properly combined to state our main result.
\section{Correction of premetrics}
\pagestyle{myheadings}
\markboth{\hfill\textsc{\small Metrizable topologies and admissible algebras}\hfill}
{\hfill\textsc{\small C. G\'omez}\hfill}
The difference between a pre-metric (to be defined below) and a metric rests in the fact that the former does not necessarily satisfies the triangle inequality. However, under appropriate continuity requirements such a deficiency can be corrected by a technique devised by I. Ben Yaacov (see \cite{YBF}). In this section we provide a description of such a correction method in order to employ it to establish our main result in the next section. To begin with, let $(G,d)$ be a metrizable group.
\begin{defi}
A function $h:G\times G\longrightarrow\R^+:=[0,+\infty)$ is a \emph{pre-metric} on the set $G$ if
\begin{enumerate}
\item it is reflexive: $h(x,y)=0$ if and only if $x=y$, for every $x,y\in G$; and 
\item it is symmetric: $h(x,y)=h(y,x)$, for every $x,y\in G$.
\end{enumerate}
\end{defi}
\noindent In order to measure how far a pre-metric is of being a metric, i.e. how far it is of satisfying the triangle inequality, the following concept is useful.
\begin{defi}
The \emph{triangle deficiency} $\td_h:\R^+\times \R^+\longrightarrow \R^+$ associated to a pre-metric $h$ defined on $G$, is the function defined by
\[
\td_h(a,b)=\sup\{h(x,z):h(x,y)\leq a,\ h(y,z)\leq b,\ x,y,z\in G\}.
\]
\end{defi}
\noindent We are mainly interested in pre-metrics whose triangle deficiency function satisfies the continuity condition expressed below.
\begin{defi}
A pre-metric $h$ defined on $G$ is \emph{locally continuous} if for every $\varepsilon>0$ there exists $\delta>0$ such that 
\begin{center}
$h(x,y)<\delta$ implies $|h(x,z)-h(z,y)|<\varepsilon$,
\end{center}
for every $x,y,z\in G$.
\end{defi}
\noindent Locally continuous pre-metrics are relevant because their triangle deficiency functions possess the property expressed below, where a function $g:\R^+\times \R^+\longrightarrow \R^+$ is called \emph{increasing} if $x\leq x'$ and $y\leq y'$ imply $g(x,y)\leq g(x',y')$.
\begin{defi}\label{tddef}
A symmetric increasing function $g:\R^+\times \R^+\longrightarrow \R^+$ such that $g(0,y)\leq y$ for every $y\in \R^+$ is a \emph{\td\ function} if the following condition is satisfied:
\begin{equation}\label{tddefcond}
\text{Whenever\ } y<t, \text{\ there exists\ } \delta>0 \text{\ such that\ } g(\delta,y+\delta)<t.
\end{equation}
\end{defi}
\noindent As previously mentioned, the triangle deficiency function of every locally continuous pre-metric is a \td\ function:
\begin{lemma}[\cite{YBF}, Lemma 2.4]\label{YBFLemma2.4}
The triangle deficiency of every locally continuous pre-metric is a \td\ function.
\end{lemma}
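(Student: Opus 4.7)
The plan is to verify in turn the four clauses packaged into Definition \ref{tddef}. Symmetry $\td_h(a,b) = \td_h(b,a)$ follows by interchanging the roles of $x$ and $z$ in the defining supremum and invoking the symmetry of $h$. Monotonicity is immediate, since enlarging the constraints $h(x,y) \leq a$ and $h(y,z) \leq b$ can only enlarge the set of triples entering the supremum. The bound $\td_h(0,y) \leq y$ is obtained by noting that $h(x,w) \leq 0$ forces $x = w$ by reflexivity, and hence $h(x,z) = h(w,z) \leq y$ whenever $h(w,z) \leq y$.

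The substantive clause is condition (\ref{tddefcond}). Given $y < t$, I set $\varepsilon := (t-y)/3 > 0$ and apply local continuity of $h$ to produce $\delta_0 > 0$ such that $h(u,v) < \delta_0$ implies $|h(u,w) - h(w,v)| < \varepsilon$ for every $u,v,w \in G$. I then take $\delta := \min(\delta_0/2,\varepsilon)$. For any triple $(x,w,z)$ with $h(x,w) \leq \delta$ and $h(w,z) \leq y + \delta$, the inequality $h(x,w) \leq \delta_0/2 < \delta_0$ activates local continuity and, using the symmetry of $h$, yields
\begin{equation*}
h(x,z) \;<\; h(w,z) + \varepsilon \;\leq\; y + \delta + \varepsilon \;\leq\; y + 2\varepsilon \;<\; t.
\end{equation*}
Taking the supremum over such triples gives $\td_h(\delta,\, y+\delta) \leq y + 2\varepsilon < t$, as required by (\ref{tddefcond}).

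The only conceptual point, and therefore the only genuine obstacle, is the translation from local continuity (a per-triple statement) to the uniform statement (\ref{tddefcond}) about the \emph{function} $\td_h$. This translation hinges on distributing the gap $t - y$ between the slack $\delta$ allowed in the constraints and the tolerance $\varepsilon$ supplied by local continuity; the specific choice $\varepsilon = (t-y)/3$ is not essential, any allocation with $\delta + \varepsilon < t - y$ and $\delta < \delta_0$ will do. All remaining verifications are routine manipulations of the definition of the supremum.
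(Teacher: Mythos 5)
Your proof is correct and follows essentially the same route as the paper: apply local continuity with a tolerance that is a fixed fraction of $t-y$, shrink $\delta$ so that the slack in the constraints plus that tolerance still falls short of $t-y$, and bound $h(x,z)$ by $h(w,z)+\varepsilon$ via symmetry. Your explicit verification of the routine clauses and your care with the $\leq$ versus $<$ mismatch between the supremum's constraints and the local-continuity hypothesis are slightly more thorough than the paper's version, but the argument is the same.
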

\begin{proof}
Let $h$ be a locally continuous pre-metric defined on the set $G$. The only nontrivial property of $\td_h$ to be verified is (\ref{tddefcond}). Let $v<t$. Since $h$ is locally continuous, given $\varepsilon=\tfrac{t-v}{2}>0$ there exists $\delta>0$ such that
\begin{equation}\label{lemma2.4ineq}
h(x,y)<\delta \text{\ implies\ } |h(x,z)-h(z,y)|<\tfrac{t-v}{2},
\end{equation}
for every $x,y,z\in G$. Without loss of generality we can assume that $\delta\in \left(0,\tfrac{t-v}{2}\right)$. Thus, if $h(x,y)<\delta$ and $h(y,z)<v+\delta<\tfrac{t+v}{2}$, the statement (\ref{lemma2.4ineq}) implies that
\begin{center}
$h(x,z)<h(z,y)+\tfrac{t-v}{2}<\tfrac{t+v}{2}+\tfrac{t-v}{2}=t$,
\end{center}
and $\td_h(\delta,v+\delta)=\sup\{h(x,z):h(x,y)\leq \delta,\ h(y,z)\leq v+\delta,\ x,y,z\in G\}<t$, follows.
\end{proof}
\noindent The condition defining a \td\ function guarantees the possibility of \emph{correct} the lack of triangle inequality, as expressed below.
\begin{defi}
A continuous increasing function $f:\R^+\longrightarrow \R^+$ such that $f(0)=0$, is a \emph{correction function}  for the function $g:\R^+\times \R^+\longrightarrow \R^+$ provided
\[
(f\circ g)(a,b)\leq f(a)+f(b),
\]
for every $a,b\in \R^+$.
\end{defi}
\noindent The fact that \td\ functions can be corrected is proved below.
\begin{lemma}[\cite{YBF}, Lemma 2.7]\label{YBFLemma2.7}
Every \td\ function admits a correction function.
\end{lemma}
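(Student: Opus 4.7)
The plan is a Frink-style construction: build a ``dyadic scale'' of radii using condition~(\ref{tddefcond}), define $f$ on this scale, and then extend to all of $\R^+$ by monotone continuous interpolation. The first step is to produce, by iterated use of the \td\ condition, a strictly decreasing sequence $a_0>a_1>a_2>\cdots$ tending to $0$ such that $g(a_{n+1},a_{n+1})<a_n$ for every $n$ (and, as discussed below, similar mixed estimates as well). Such a sequence is available because, for each fixed $t>0$, applying (\ref{tddefcond}) with $y=0$ yields $\delta>0$ with $g(\delta,\delta)\le g(\delta,0+\delta)<t$, and one can always shrink $\delta$ further so that $a_{n+1}<a_n/2$.

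Next, I would define $f$ on $\{0\}\cup\{a_n:n\ge 0\}$ by $f(0)=0$ and $f(a_n)=2^{-n}$, extend by piecewise linear interpolation on each segment $[a_{n+1},a_n]$, and let $f$ grow unboundedly above $a_0$. This produces the required continuous increasing $f:\R^+\longrightarrow\R^+$ with $f(0)=0$.

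The verification of $(f\circ g)(a,b)\le f(a)+f(b)$ proceeds by a case analysis indexed by the dyadic scale. Given $a,b>0$, locate the indices $n,m$ for which $a\in[a_{n+1},a_n]$ and $b\in[a_{m+1},a_m]$, so that $f(a)\ge 2^{-(n+1)}$ and $f(b)\ge 2^{-(m+1)}$; then use monotonicity and symmetry of $g$ together with the inequalities built into the scale to bound $g(a,b)$ above by some $a_j$, and translate this back through the monotonicity of $f$ to obtain the desired comparison.

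The main obstacle, as this case analysis reveals, is that the plain diagonal estimate $g(a_{n+1},a_{n+1})<a_n$ is not sharp enough by itself: it only yields a correction up to a multiplicative constant, not the tight inequality $f(g(a,b))\le f(a)+f(b)$. Overcoming this forces exploiting (\ref{tddefcond}) in its full strength---that is, for $y\ne 0$---in order to produce also mixed estimates of the form $g(a_{n+1},a_n+a_{n+1})<a_{n-1}$, or equivalently to refine the dyadic scale finely enough that the bounds for unequal indices match the summation $2^{-(n+1)}+2^{-(m+1)}$. This is where the Frink-type chain argument really bites and where the bulk of the bookkeeping of the proof will lie.
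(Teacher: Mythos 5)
Your overall strategy is the right one --- the paper's proof is also a Frink/Birkhoff--Kakutani-style construction of a dyadic scale of radii followed by a sup-type definition of $f$ --- but as written your argument has a genuine gap, and it is precisely the one you flag yourself at the end. The diagonal chain $g(a_{n+1},a_{n+1})<a_n$ together with $f(a_n)=2^{-n}$ can only deliver a quasi-subadditive estimate $f(g(a,b))\leq K\bigl(f(a)+f(b)\bigr)$; the tight inequality requires a scale $(r_q)$ indexed by \emph{all} dyadic rationals $q\in(0,1]$ satisfying the full pairwise compatibility $g(r_q,r_{q'})<r_{q+q'}$, because the target value $f(a)+f(b)$ is generically a sum of two distinct dyadics and does not lie on the geometric scale $\{2^{-n}\}$. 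Producing such a family is not a routine refinement: at each stage of the induction the new radius $r_q$ must simultaneously satisfy finitely many constraints of the form $g(r_q,r_{q'})<r_{q^-+q'}$ against all previously chosen radii, while staying strictly between its dyadic neighbours $r_{q^-}$ and $r_{q^+}$ so that the scale remains increasing and the constraints remain satisfiable at later stages. This simultaneous feasibility --- in the paper, the choice $r_q=\tfrac12(r_{q^-}+s_q)$ with $s_q$ the minimum of the thresholds $s_{qq'}=\max\{s:g(s,r_{q'})<r_{q^-+q'}\}$, after first replacing $g$ by a regularized $\tilde g$ so that condition (\ref{tddefcond}) can be applied with $y=r_{q'}>0$ rather than only $y=0$ --- is the actual content of the lemma, and your proposal defers it entirely ("this is where the bulk of the bookkeeping will lie").

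Two smaller points. First, once the full dyadic family is in hand, the natural definition of $f$ is $f(t)=\sup\{q:r_q<t\}$ (equivalently $\inf\{q:r_q>t\}$), whose continuity follows from the agreement of the sup and inf formulas; piecewise-linear interpolation on the coarse scale $\{a_n\}$ would not obviously interact correctly with the refined family. Second, letting $f$ "grow unboundedly above $a_0$" is unnecessary and potentially harmful: you would then owe a verification of the correction inequality for large arguments as well, whereas capping $f$ at $1$ (as the paper does) makes that regime trivial since $f(g(a,b))\leq 1\leq f(a)+f(b)$ whenever $f(a)+f(b)\geq 1$.
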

\begin{proof}
Let $g:\R^+\times \R^+\longrightarrow \R^+$ be a \td\ function. Without loss of generality we can assume that $g$ satisfies the following stronger condition:
\begin{center}
Whenever $g(x,y)<t$, there exists $\delta>0$ such that $g(x+\delta,y+\delta)<t.$
\end{center}
Indeed, the function
\[
\tilde g(x,y)=\inf\{g(x',y'):x<x',y<y'\},
\]
satisfies this condition (see \cite{YBF}, Lemma 2.2), and from $g\leq \tilde g$ it follows that every correction function for $\tilde g$ is also a correction function for $g$.\\
\\
Under this assumption, we claim that a correction function $f:\R^+\longrightarrow [0,1]$ for $g$ is given by
\begin{equation}\label{corsupdef}
f(t)=\sup\{q\in D:r_q<t\},
\end{equation}
where $\sup\varnothing=0$, the set $D=\left\{\tfrac{k}{2^n}:0<k\leq 2^n,\ n\in \N\right\}$ consists of the dyadic numbers in $(0,1]$, and the sequence $(r_q)_{q\in \Q}\subset (0,1]$, to be defined below, satisfies the following conditions for every $q,q'\in D$:
\begin{itemize}
\item[(C1)] It is increasing, i.e. $r_q<r_{q'}$ provided $q<q'$.
\item[(C2)] It is subadditive with respect to the TD function, i.e. $g(r_q,r_{q'})<r_{q+q'}$.
\end{itemize}
Before defining the mentioned sequence, let us see why this works. From condition (C1) it follows that the function $f$ can also be defined by
\begin{equation}\label{corinfdef}
f(t)=\inf\{q\in D:r_q>t\},
\end{equation}
where $\inf\varnothing=1$, and consequently the continuity of $f$ is implied by (\ref{corsupdef}) and (\ref{corinfdef}). Furthermore, the fact that for every $(a,b)\in \R^+\times \R^+$ and $t\in \R^+$ it holds that
\begin{equation}\label{corfcn}
f(a)+f(b)<t \text{\ implies\ } f(g(a,b))<t,
\end{equation}
automatically implies $f(g(a,b))\leq f(a)+f(b)$, i.e. $f$ is a correction function for $g$. Indeed, in order to see (\ref{corfcn}) assume $f(a)+f(b)<t$. If $t<1$, then $f(a)<t_1$ and $f(b)<t-t_1$ for some $t_1\in (0,1]$, and thus (\ref{corinfdef}) guarantees the existence of $q_1<t_1$ and $q_2<t-t_1$ such that $r_{q_1}>a$ and $r_{q_2}>b$. Thus $g(a,b)\leq g(r_{q_1},r_{q_2})<r_{q_1+q_2}$ and (\ref{corsupdef}) then implies $f(g(a,b))\leq q_1+q_2<t$. On the other hand, if $t\geq 1$, then $f(g(a,b))\leq 1\leq t$ follows immediately.\\
\\
To construct the sequence $(r_q)_{q\in D}$, we consider the finite subsets $D_n=\left\{\tfrac{k}{2^n}:0<k\leq 2^n\right\}$ for $n\in \N$, and define the sequence by induction on $n$, i.e. we assume that $r_q$ has already been chosen for $q\in D_n$ and define $r_q$ for $q=\tfrac{k}{2^{n+1}}$, $0<k<2^{n+1}$ odd. To begin with, let $r_0=0$ and $r_1=1$. For $q\in D_{n+1}$ define $q^-=q-\tfrac{1}{2^{n+1}}\in D_n\cup \{0\}$ and $q^+=q+\tfrac{1}{2^{n+1}}\in D_n$.\\
\\
Let $q\geq \tfrac{3}{2^{n+1}}$. For each $q'\in D_n\cap [0,1-q^-]$ let $s_{qq'}$ to be 
\begin{center}
$s_{qq'}=\max\{s\in [0,1]:g(s,r_{q'})<r_{q^-+q'}\}$,
\end{center}
(observe the resemblance of this definition with condition (C2)) and define $s_q$ as the smallest value among $q$, $r_{q^+}$ and all the $s_{qq'}$'s, namely
\begin{center}
$s_q=\min\{q, r_{q^+}, s_{qq'}:q'\in D_n\cap [0,1-q^-]\}$.
\end{center}
Finally, in order to fulfil condition (C1) take $r_q$ to be the midpoint of the interval $(r_{q^-},s_q)$:
\begin{center}
$r_q=\frac 1 2(r_{q^-}+s_q)$.
\end{center}
The points $r_q$ for $q=\tfrac{1}{2^{n+1}}$ are chosen in a similar way. Indeed, take $s_{q0}$ to be 
\begin{center}
$s_{q0}=\max\left\{s\in [0,1]:g(s,\tfrac 1 2 r_{2q})<r_{2q}\right\}$,
\end{center}
and for each $q'\in D_{n+1}\cap [2q,1-q]$ take $s_{qq'}$ to be 
\begin{center}
$s_{qq'}=\max\{s\in [0,1]:g(s,r_{q'})<r_{q+q'}\}$.
\end{center}
As before, define $s_q$ as the smallest value among $q$, $r_{2q}$, $s_{q0}$ and all the $s_{qq'}$'s:
\begin{center}
$s_q=\min\{q, r_{2q}, s_{q0}, s_{qq'}:q'\in D_{n+1}\cap [2q,1-q]\}$,
\end{center}
and take $r_q$ to be the midpoint of the interval $(0,s_q)$:
\begin{center}
$r_q=\frac 1 2 s_q$.
\end{center}
Finally, it can be proved that the sequence $(r_q)_{q\in D}$ so obtained satisfies conditions (C1) and (C2) (see \cite{YBF}, Lemma 2.7).
\end{proof}
\noindent The following characterization of locally continuous pre-metrics is of fundamental importance for the main result of the final section.
\begin{theo}[\cite{YBF}, Theorem 2.8]\label{YBFTheorem2.8}
A pre-metric $h$ is locally continuous if and only if it is uniformly equivalent to a metric. This metric can be taken of the form $f\circ h$, where $f$ is a correction function for the triangle deficiency of $h$.
\end{theo}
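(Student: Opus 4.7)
The plan is to handle the two directions of the biconditional separately, with the forward implication assembling the two preceding lemmas into an actual metric, and the converse following from the metric triangle inequality together with the definition of uniform equivalence.

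For the direction $(\Rightarrow)$, assume $h$ is locally continuous. Lemma \ref{YBFLemma2.4} gives that $\td_h$ is a \td\ function, whence Lemma \ref{YBFLemma2.7} supplies a correction function $f\colon \R^+ \to [0,1]$ for $\td_h$. I would then take $d := f \circ h$ and verify that it is a metric. Symmetry and $d(x,x)=0$ are immediate from the corresponding properties of $h$ and from $f(0)=0$. The triangle inequality is a one-line computation: $h(x,z) \leq \td_h(h(x,y), h(y,z))$ by the very definition of $\td_h$, so applying the increasing $f$ and invoking the correction inequality yields $d(x,z) \leq f(h(x,y)) + f(h(y,z)) = d(x,y) + d(y,z)$. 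Non-degeneracy---that $d(x,y) > 0$ whenever $x \neq y$---reduces to checking that the $f$ produced in Lemma \ref{YBFLemma2.7} is strictly positive on $(0,+\infty)$; here I would use formula (\ref{corsupdef}) together with the observation that the midpoint construction forces $r_q \leq q/2$ for $q = 1/2^{n+1}$, so that for every $t>0$ some sufficiently small dyadic $q$ satisfies $r_q < t$, giving $f(t) \geq q > 0$.

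With $d$ a metric, uniform equivalence with $h$ follows from two short arguments. Continuity of $f$ at $0$ combined with $f(0)=0$ gives $h(x,y) \to 0 \Rightarrow d(x,y) \to 0$; the reverse implication uses the alternative formula (\ref{corinfdef}) together with $r_q \to 0$ along small dyadic $q$, which shows that smallness of $f(h(x,y))$ forces smallness of $h(x,y)$.

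For the direction $(\Leftarrow)$, assume $h$ is uniformly equivalent to a metric $d$. I would derive local continuity of $h$ from the metric inequality $|d(x,z) - d(y,z)| \leq d(x,y)$: given $\varepsilon > 0$, select via uniform equivalence an $\varepsilon' > 0$ translating $d$-smallness of magnitude $\varepsilon'$ into $h$-smallness of magnitude $\varepsilon$, and then a $\delta > 0$ translating $h(x,y) < \delta$ into $d(x,y) < \varepsilon'$; transporting the metric inequality back across the equivalence then delivers $|h(x,z) - h(z,y)| < \varepsilon$.

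The main obstacle is verifying in the forward direction that $f$ vanishes only at $0$, so that $d$ actually separates points rather than being a pseudo-metric; this is not visible from the abstract definition of a correction function and requires tracing through the recursive midpoint construction of $(r_q)_{q\in D}$ in Lemma \ref{YBFLemma2.7}. The converse direction, while morally just the triangle inequality, also requires some care in making the transport across the equivalence rigorous.
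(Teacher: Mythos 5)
Your forward direction follows the paper's route exactly: Lemmas \ref{YBFLemma2.4} and \ref{YBFLemma2.7} produce the correction function $f$, and the triangle inequality for $f\circ h$ comes from $h(x,z)\leq \td_h(h(x,y),h(y,z))$ together with the correction inequality. You are in fact more careful than the paper on two points it passes over in silence: the strict positivity of $f$ on $(0,+\infty)$ (so that $f\circ h$ separates points rather than being a pseudo-metric), which does follow from $r_q\leq q$ in the recursive construction; and the two-sided quantitative comparison of $f\circ h$ with $h$ via (\ref{corsupdef}) and (\ref{corinfdef}). Those additions are correct.

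The gap is in the converse, and it concerns what ``uniformly equivalent'' must mean. Your chain is: $h(x,y)<\delta$ gives $d(x,y)<\varepsilon'$, hence $|d(x,z)-d(z,y)|<\varepsilon'$, and then you ``transport the metric inequality back across the equivalence'' to get $|h(x,z)-h(z,y)|<\varepsilon$. But the pairwise notion of equivalence you invoke relates $d(a,b)$ to $h(a,b)$ for one and the same pair; it says nothing about whether closeness of the two real numbers $d(x,z)$ and $d(z,y)$ forces closeness of $h(x,z)$ and $h(z,y)$. Under that weak reading the sufficiency direction is actually false: on $\R$ take $d(x,y)=\min(|x-y|,2)$ and $h=\phi\circ d$ with $\phi(t)=t$ for $t\leq 1$ and $\phi(t)=t+1$ for $t>1$; then $h<\delta$ iff $d<\delta$ for all $\delta\leq 1$, yet with $x=0$, $y$ near $0$ and $z=-1$ one has $|h(x,z)-h(z,y)|\geq 1$, so $h$ is not locally continuous. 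What the theorem needs, and what the paper's proof tacitly uses at display (\ref{suf2}), is the stronger reading: each of $h$ and $d$ is uniformly continuous with respect to the other in one variable, uniformly in the third point, i.e.\ $d(x,y)<\delta'$ implies $|h(x,z)-h(z,y)|<\varepsilon$ for every $z$, and symmetrically. With that reading your converse closes at once (the metric inequality is only needed to turn $|d(x,z)-d(z,y)|<\varepsilon'$ into $d(x,y)<\varepsilon'$); but then your forward direction, which only establishes the weak pairwise comparison, would need the paper's extra step (local continuity of $h$ combined with uniform continuity of $f$) to deliver the strong equivalence. As written, whichever definition you fix, one of your two directions is incomplete.
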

\begin{proof}
\emph{Necessity.} Let $h:G\times G\to \R^+$ be a pre-metric defined on the set $G$. If $h$ is locally continuous, from Lemmas \ref{YBFLemma2.4} and \ref{YBFLemma2.7} it follows that its triangle deficiency is a \td\ function that admits a correction function $f$. To see that $f\circ h$ is a metric it suffices to verify that it satisfies the triangle inequality. In fact, by the very definition, $h(x,z)\leq \td_h(h(x,y),h(y,z))$, and since $f$ is a correction function for $\td_h$, 
\[
(f\circ h)(x,z)\leq f(\td_h(h(x,y),h(y,z)))\leq (f\circ h)(x,y)+(f\circ h)(y,z),
\]
and thus, $f\circ h$ is a metric.\\
\\
Now, let us prove that $h$ and $f\circ h$ are uniformly equivalent. Suppose $f(h(x,y))$ is small. The continuity and monotonicity of $f$ imply that $h(x,y)$ itself is small. Therefore, the local continuity of $h$ implies that the difference between $h(x,z)$ and $h(z,y)$ is small. Conversely, assume $h(x,y)$ is small. The local continuity of $h$ implies that $h(x,z)$ and $h(z,y)$ are close. Then, the uniform continuity of $f$ on compact subsets implies that $(f\circ h)(x,z)$ and $(f\circ h)h(z,y)$ are also close.\\
\\
\emph{Sufficiency.} Let $h:G\times G\to \R^+$ be a pre-metric uniformly equivalent to a metric $d$ defined on the set $G$. Let us assume that $h$ is not locally continuous. Then, there exists $\varepsilon_0>0$ such that for every $\delta>0$ a tern $x_\delta,y_\delta,z_\delta\in G$ can be found satisfying
\begin{equation}\label{suf1}
h(x_\delta,y_\delta)<\delta, \text{\ and,\ } |h(x_\delta,z_\delta)-h(z_\delta,y_\delta)|\geq \varepsilon_0.
\end{equation}
Now, since $h$ and $d$ are uniformly equivalent, for such an $\varepsilon_0$ there exists $\delta'>0$ such that
\begin{equation}\label{suf2}
d(x,y)<\delta' \text{\ implies\ } |h(x,z)-h(z,y)|< \varepsilon_0,
\end{equation}
for every $x,y,z\in G$. Also, the uniform equivalence implies that for this $\delta'$ there exists $\delta''>0$ for which
\begin{equation*}
h(x,y)<\delta'' \text{\ implies\ } |d(x,z)-d(z,y)|< \delta',
\end{equation*}
for every $x,y,z\in G$. However, given that $d$ is a metric, the last assertion can be replaced by
\begin{equation}\label{suf3}
h(x,y)<\delta'' \text{\ implies\ } d(x,y)< \delta',
\end{equation}
for every $x,y\in G$. Hence, by (\ref{suf1}) the tern $x_{\delta''},y_{\delta''},z_{\delta''}\in G$ is such that
\begin{equation*}
h(x_{\delta''},y_{\delta''})<{\delta''}, \text{\ and,\ } |h(x_{\delta''},z_{\delta''})-h(z_{\delta''},y_{\delta''})|\geq \varepsilon_0;
\end{equation*}
but by (\ref{suf3}) and (\ref{suf2}) it also satisfies
\begin{equation*}
|h(x_{\delta''},z_{\delta''})-h(z_{\delta''},y_{\delta''})|< \varepsilon_0.
\end{equation*}
This contradiction leads us to conclude that $h$ is locally continuous.
\end{proof}
\noindent In the next section, we apply the last characterization of locally continuous pre-metrics to obtain a necessary and sufficient condition for a function algebra to induce the topology of a metrizable group.
\section{Metrizable topologies and admissible algebras}
\pagestyle{myheadings}
\markboth{\hfill\textsc{\small Metrizable topologies and admissible algebras}\hfill}
{\hfill\textsc{\small C. G\'omez}\hfill}
In this section we generalize the main result of \cite{YBF} and prove that the topology of a metrizable group is induced by an algebra of uniformly continuous functions if, and only if, the metric is uniformly equivalent to a member of the algebra. We begin by defining an appropriate relation between the metric and the function algebra.\\
\\
We denote by $\luc(G)\subset C_b(G)$ the algebra of \emph{left uniformly continuous} functions on $G$, where $f\in \text{\luc}(G)$ if given $\varepsilon>0$ there exists a neighbourhood $U$ of $e\in G$ such that $|f(x)-f(y)|<\varepsilon$ whenever $x^{-1}y\in U$. We write $\uc(G):=\luc(G)\cap \ruc(G)$ for the algebra of \emph{uniformly continuous} functions on $G$, where the algebra $\ruc(G)$ is defined analogously to $\luc(G)$.
\begin{defi}
Let $\balg\subseteq \uc(G)$ be an $m$-admissible algebra of $C_b(G)$. A metric $h$ on $G$ is called an \emph{$\balg-$metric} provided the function $h_e(\cdot):=h(e,\cdot)$ belongs to $\balg$.
\end{defi}
\noindent Now we are ready to formulate our announced generalization.
\begin{theo}\label{metgprep}
Let $(G,d)$ be a  metrizable group. An $m$-admissible algebra $\balg\subset \uc(G)$ of $C_b(G)$ induces the topology of $G$ if and only if there exists a left-invariant $\balg-$metric uniformly equivalent to $d$.
\end{theo}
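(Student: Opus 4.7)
My plan is to handle the two directions separately, with the sufficiency being short and the necessity being the substantial part.

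\textbf{Sufficiency.} Suppose there is a left-invariant $\balg$-metric $h$ uniformly equivalent to $d$. Given $x \in G$ and a closed set $F$ with $x \notin F$, uniform equivalence implies $h$ generates the topology of $G$, so there is $r > 0$ with $h(x, y) \geq r$ for every $y \in F$. The function $y \mapsto h_e(x^{-1}y) = (L_{x^{-1}}h_e)(y)$ belongs to $\balg$ by translation invariance, vanishes at $x$, and is bounded below by $r$ on $F$; hence $\balg$ separates points from closed sets and therefore induces the topology of $G$.

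\textbf{Necessity.} Assume $\balg$ induces the topology of $G$. I may assume $d$ is left-invariant: the Birkhoff--Kakutani theorem supplies a left-invariant metric generating the topology, and any two left-invariant metrics generating the same topological group topology induce the same (left) uniformity and are therefore uniformly equivalent. My plan is to build a left-invariant locally continuous pre-metric $h_0$ with $(h_0)_e \in \balg$ and apply Theorem \ref{YBFTheorem2.8}.

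The core of the argument is the construction of a function $\phi \in \balg$ with $\phi(e) = 0$, $\phi(x) > 0$ for $x \neq e$, $\phi(x^{-1}) = \phi(x)$, and such that the sublevel sets $\{\phi < r\}$ form a neighborhood base of $e$ as $r \downarrow 0$. I exploit the first-countability of $G$ by fixing a countable symmetric decreasing neighborhood base $\{V_n\}$ of $e$. For each $n$, since $\balg$ induces the topology there exists $f_n \in \balg$ with $f_n(e) = 0$ and $|f_n|$ bounded below on $G \setminus V_n$ by some $c_n > 0$; replacing $f_n$ by $\min(|f_n|^2/c_n^2,\, 1)$ (valid in the $C^*$-algebra $\balg$) we may assume $0 \leq f_n \leq 1$, $f_n(e) = 0$, and $f_n \equiv 1$ on $G \setminus V_n$. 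A weighted sum $\sum_n 2^{-n} f_n$ remains in the closed subalgebra $\balg$ and delivers the required zero set and sublevel-set control; the final $\phi$ is obtained by a symmetrization step ensuring $\phi(x) = \phi(x^{-1})$ without leaving $\balg$. Setting $h_0(x, y) := \phi(x^{-1}y)$ yields a left-invariant, reflexive, symmetric pre-metric on $G$, whose local continuity follows from the uniform continuity of $\phi$ as an element of $\uc(G)$ together with the compatibility between $\phi$'s sublevel sets and a neighborhood base of $e$. Theorem \ref{YBFTheorem2.8} then supplies a correction function $f$ with $h := f \circ h_0$ a metric uniformly equivalent to $h_0$. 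Left-invariance of $h$ is inherited from $h_0$; the identity $h_e = f \circ \phi$ combined with the continuous functional calculus on the bounded real-valued element $\phi$ of the $C^*$-algebra $\balg$ gives $h_e \in \balg$; and $h$, being a left-invariant metric generating the topology of $G$, is uniformly equivalent to $d$ by the remark on left uniformities.

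\textbf{Main obstacle.} The delicate point is the symmetrization: $m$-admissibility of $\balg$ guarantees only translation invariance and left introversion, not invariance under the anti-homomorphism $f \mapsto f \circ \iota$, so one cannot simply replace $\phi$ by $\tfrac{1}{2}(\phi + \phi\circ\iota)$. The separating functions $f_n$ (together with their left/right translates and algebraic combinations within $\balg$) must therefore be assembled more carefully to produce an inversion-symmetric element of $\balg$ whose sublevel sets still form a neighborhood base of $e$; this is where I expect the bulk of the technical work to concentrate.
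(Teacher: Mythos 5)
Your proposal takes essentially the same route as the paper's proof. Sufficiency is handled identically. For necessity, the paper likewise uses the separation property of $\balg$ to produce $[0,1]$-valued, inversion-symmetric functions $f_n\in\balg$ with $f_n\equiv 0$ on $B_{r_{n+1}}(e)$ and $f_n\equiv 1$ off $B_{r_n}(e)$, forms $h(x,y)=\sum_n 2^{-(n+1)}f_n\left(x^{-1}y\right)$ --- exactly your $\phi\left(x^{-1}y\right)$ with $\phi=\sum_n 2^{-(n+1)}f_n$ --- shows $h$ is uniformly equivalent to $d$ (hence locally continuous by the sufficiency half of Theorem \ref{YBFTheorem2.8}), and corrects it to $f_h\circ h$. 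Your two local deviations are harmless: you obtain local continuity directly from $\phi\in\ruc(G)$ together with the sublevel-set condition rather than via uniform equivalence with $d$, and you place $f_h\circ\phi$ in $\balg$ by continuous functional calculus where the paper extends $h_e$ to $G^{\balg}$ and composes there; these are interchangeable. Your reduction to a left-invariant $d$ is also consistent with the paper, which tacitly assumes left-invariance when it writes $d\left(x^{-1}y,e\right)=d(y,x)$.

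The one step you leave open --- manufacturing an inversion-symmetric $\phi\in\balg$ --- is precisely the step the paper dispatches with the bare assertion that ``we can assume without loss of generality that $f_0$ is symmetric,'' offering no mechanism for doing so. Your diagnosis of the difficulty is accurate: $m$-admissibility gives translation invariance and left introversion but not closure under $f\mapsto f\circ\iota$, so the naive symmetrizations $\tfrac{1}{2}(\phi+\phi\circ\iota)$ or $\max(\phi,\phi\circ\iota)$ may leave $\balg$, even though they stay inside $\uc(G)$. Note that symmetry cannot be bypassed: any left-invariant metric $h$ satisfies $h_e\left(x^{-1}\right)=h\left(x\cdot e,\,x\cdot x^{-1}\right)=h(x,e)=h_e(x)$, so the theorem's conclusion already forces $\balg$ to contain inversion-symmetric separating functions, and deriving this from the hypotheses (rather than assuming it) is genuinely required. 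Your proposal is therefore incomplete at this point, but it is incomplete at exactly the point where the paper's own argument is silent, and you have at least identified the obstruction rather than eliding it.
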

\begin{proof}
\emph{Necessity.} We first construct a left-invariant pre-metric uniformly equivalent to $d$. Given that $\balg$ separates points from closed sets, for $r_0:=1$ there exists $f_0:G\longrightarrow [0,1]$ in $\balg$ such that $f(e)=0$ and $f_0\left[B_{r_0}(e)^\complement\right]=\{1\}$. We can assume without loss of generality that $f_0$ is symmetric and vanishes on some neighbourhood $B_{r_1}(e)$ of $e$, with $r_1\in \left(0,2^{-1}\right)$.\\
\\
In this way, a sequence $(r_n)_{n\geq 0}\subseteq (0,1]$ of radii and a sequence $(f_n)_{n\geq 0}\subseteq \balg$ of symmetric functions can be obtained such that $f_n[B_{r_{n+1}}(e)]=\{0\}$, $f_n\left[B_{r_n}(e)^\complement\right]=\{1\}$ and $r_n\leq 2^{-n}$ for $n\geq 0$.\\
\\ 
Let us consider the left-invariant pre-metric $h(x,y):=\sum_{n\geq 0} 2^{-(n+1)}f_n\left(x^{-1}y\right)$. From the construction of $h$ and given that $\balg\subseteq \uc(G)$, it follows that $h$ is uniformly continuous with respect to $d$. Conversely, given $\varepsilon>0$ let $m\in \N$ be such that $2^{-(m-1)}<\varepsilon$, then for $0<\delta<2^{-m}$ we have that $h(x,y)<\delta$ implies $f_{m-1}\left(x^{-1}y\right)\neq 1$, and consequently $d\left(x^{-1}y,e\right)=d(y,x)<r_{m-1}\leq 2^{-(m-1)}<\varepsilon$. Therefore, $h$ is uniformly equivalent to $d$.\\
\\
Now, we correct our pre-metric in order to obtain a metric. In fact, being uniformly equivalent to $d$, the pre-metric $h$ is locally continuous and hence it is uniformly equivalent to a metric of the form $f_h\circ h$, for some continuous function $f_h:\R^+\longrightarrow \R^+$ (see Theorem \ref{YBFTheorem2.8}).\\
\\
Finally, we prove that the new metric is an $\balg-$metric. Indeed, being the uniform limit of elements in $\balg$ the function $h_e$ belongs to $\balg$ and hence possesses a unique continuous extension $\widetilde{h_e}$ defined on the $\balg-$compactification $G^\balg$ of $G$. Finally, since the function $f_h\circ \widetilde{h_e}$ is a continuous extension of $f_h\circ h_e=(f_h\circ h)_e$ to $G^\balg$, it follows that $(f_h\circ h)_e$ belongs to $\balg$.
\[
\begin{diagram}
\node{} \node{G^\balg} \arrow{s,r}{\widetilde{h_e}} \arrow{se,t}{\widetilde{(f_h\circ h)_e}}\\
\node{G} \arrow{ne,t}{\epsilon_\balg} \arrow{e,b}{h_e}\node{\R} \arrow{e,b}{f_h} \node{\R.}
\end{diagram}
\]
\emph{Sufficiency.} It follows from the fact that the  function $d(e,\cdot)\in \balg$ separates $e$ from every closed set not containing it.
\end{proof}

\bigskip
\hrule
\bigskip

\noindent\textsc{Camilo G\'omez, Departamento de Matem\'aticas,
Universidad de los Andes, Carrera 1.a 18 A 10, Bogot\'a D.C.,
Colombia.  Apartado A\'ereo 4976 and Facultad de Ingenier\'ia, Universidad de La Sabana, Campus Universitario Puente del Com\'un, Ch\'ia, Colombia.}

\noindent e-mail: {\tt alfon-go@uniandes.edu.co}
\end{document}